\documentclass[12pt]{amsart}
\usepackage[a-2u]{pdfx}

\usepackage[utf8]{inputenc}
\usepackage{amsmath}        % extensions for typesetting of math
\usepackage{amsfonts}       % math fonts
\usepackage{amsthm}         % theorems, definitions, etc.
\usepackage{amssymb} 
\usepackage{bm}             % boldface symbols (\bm)

\usepackage{mathtools}
\usepackage{thmtools}
\usepackage{xcolor}
\usepackage{empheq}   
\usepackage{cleveref} 

\theoremstyle{plain}

\newtheorem{theorem}{Theorem}[section]

\newtheorem{lemma}{Lemma}[section]
\newtheorem{cor}{Corollary}[section]

\newcommand{\rr}{\mathbb{R}}
\newcommand{\cc}{\mathbb{C}}
\newcommand{\norm}[3]{\lVert #1 \rVert_{#2}^{#3}}
\newcommand*{\bu}{\bm u} 
\newcommand*{\bv}{\bm v}
\newcommand{\bvarphi}{\bm \varphi}
\newcommand{\entau}[1]{[{#1}\bm n]_{\tau}}
\newcommand{\diver}{\operatorname{div}}
\newcommand{\beh}{\bm h}
\newcommand{\bef}{\bm f}
\newcommand{\Bdry}{\Gamma}
\newcommand{\Du}{{\rm D}\bm u}

\newcommand{\bpjx}{\mathcal J_1^{x'}}

\newcommand{\hhinf}{\mathcal{H}^{\infty}}

\title[Stokes with DBC]{On $L^p$-semigroup to Stokes equation with dynamic slip boundary condition\\ in the half-space}

\thanks{Authors acknowledge the support of the project No. 20-11027X financed by the Czech Science Foundation (GA\v{C}R)}

\thanks{* Corresponding author.}

\author[D.~Pra\v{z}\'{a}k]{Dalibor Pra\v{z}\'{a}k$^*$}
%\address[D.~Pra\v{z}\'{a}k]{Corresponding author.}
\address{Charles University, Faculty of Mathematics and Physics, Department of Mathematical Analysis, Sokolovsk\'{a}~83, 186~75~Prague~8, Czech Republic}
\email{prazak@karlin.mff.cuni.cz}

\author[M. Zelina]{Michael Zelina$^*$}
\address{Charles University, Faculty of Mathematics and Physics, Department of Mathematical Analysis, Sokolovsk\'{a}~83, 186~75~Prague~8, Czech Republic}
\address{University of Uppsala, Department of Mathematics, Ångströmlaboratoriet, Regementsvägen 10, 751 06 Uppsala, Sweden}
\email{michael.zelina@math.uu.se}

\keywords{Stokes equations, dynamic slip boundary condition, 
unbounded domains, optimal regularity, analytic semigroup}
\subjclass[2000]{76D07, 47D03, 35B65}
%%% 76D07 Stokes and related (Oseen, etc.) flows
%%% 47D03 	Groups and semigroups of linear operators 
%%% 35B65 Smoothness and regularity of solutions to PDEs

\date{}

\begin{document}

\begin{abstract}
We consider evolutionary Stokes system, coupled with the
so-called dynamic slip boundary condition, in the simple
geometry of a $d$-dimensional half-space. Using the standard
technique of the Fourier transform in tangential directions, we obtain
an explicit formula for the resolvent. We then deduce
estimates for both the weak (i.e. $W^{1,p}$) and strong (hence
$W^{2,p}$) solutions, which are optimal in terms of the data
belonging to appropriate negative Sobolev or fractional Besov
space. In the latter case $L^p$-integrability of the pressure
gradient is included.  We allow for solutions with
non-zero divergence, thus preparing the way for extensions to
general domains. As a by-product, we show that the system
generates an analytic semigroup in $L^p(\Omega)\times
L^p(\partial \Omega)$.

Our approach remains elementary in the sense that only the
classical Mikhlin multiplier theorem will be used. The methods
of $\mathcal{H}^{\infty}$-calculus are implicitly present; but we stay
away from the concept of $R$-boundedness and related heavy
functional analytic machinery.
\end{abstract}

\maketitle

\section{Introduction}	\label{S:1}

This paper is motivated by the system
\begin{equation}	\label{evo1}
\begin{aligned}	
\partial_t \bu - \diver 2\nu \Du + \nabla \pi & = \bef,
\qquad \diver \bu = 0
\quad \textrm{ in $(0, T) \times \Omega$} ,
\\		
\beta \partial_t \bu + \alpha \bu + 2\nu \entau{(\Du)} &= \beta \beh,
\qquad \bu \cdot \bm n = 0
\quad \textrm{ on $(0, T) \times \Bdry$} ,
\end{aligned}
\end{equation}
where $\alpha$, $\beta$, and $\nu$ are positive parameters, $\Omega = \{ x_d > 0 \}$ is the $d$-dimensional
half-space, and $\Bdry = \{ x_d = 0 \}$ the corresponding
boundary. Furthermore, $\bm n$ is the outer normal vector, and
$\tau : \rr^{d} \to \rr^{d-1}$ denotes the tangential component, i.e.
\begin{equation*}
[(\Du) \bm n]_{\tau} = (\Du)\bm n  - ((\Du) \bm n  \cdot \bm n) \bm n .
\end{equation*}

Note that \eqref{evo1} consists of two evolution
equations, coupled via the trace operator. The problem can be
efficiently treated in the Hilbert $L^2$-setting, since there
is an underlying symmetric operator $A:V \to V^*$, where $V$ is
a suitable subspace to $W^{1,2}(\Omega)$, see e.g.
\cite{ABM21}, cf.\ also the weak formulation \eqref{res:weak}
below. However, the $L^p$-theory is expectedly more difficult
if $p\neq 2$ and up to our knowledge, the problem \eqref{evo1}
has not been treated in the literature (although a number of
closely related results are available, see also the discussion
below). In the present paper, we intend to contribute to this,
with two main results being: the existence of an analytic
semigroup in $L^p(\Omega) \times L^p(\Bdry)$, and the optimal
$W^{2,p}$ and $W^{1,p}$-regularity estimates for the
corresponding stationary (resolvent) problem. 

\par
Our paper is organized as follows. In Section~\ref{S:2}, we
introduce the function spaces and state the two main results:
\autoref{thm:Main1} and \autoref{thm:Main2}, providing
resolvent estimates of optimal regularity for both the weak
and strong solutions. In \autoref{thm:Cor1}, we show the
existence of an analytic semigroup; we also identify the domain
of its generator.  Main technical results of the paper 
follow: in Section~\ref{S:3}, we use (partial) Fourier
transform, in the spirit of \cite{FaSo94}, to express the
resolvent in terms of the boundary right-hand side. Key
estimates in the $L^p$ spaces are then obtained in
Section~\ref{S:4}. Our approach remains more elementary in that we
only rely on the classical Mikhlin multiplier theorem. One can, however,
note that \autoref{lem:mult2} in fact asserts the existence
of $\hhinf$-calculus for the operator $(-\Delta)^{1/2}$.
Finally, the proofs of the main theorems are
completed in Section~\ref{S:5}, essentially by combining the results of Section~\ref{S:4} with the well-known $L^p$-theory
for the (homogeneous) Dirichlet problem, as given e.g.
in \cite{Galdi11}.

\par
Although our results are formulated for the case of the half-space
$\Omega=\{x_d > 0\}$, an extension to more general domains is
straightforward, using standard localization and perturbation
arguments. Crucially, this requires extension of our estimates
to the case of functions $\bu$ with non-zero divergence. We
treat this situation explicitly in the part (iii) of
\autoref{thm:Main2}.

\par
Let us conclude the introduction by discussing some related,
both classical and more recent results. Concerning the
Stokes system subject to the homogeneous Dirichlet boundary condition,
the existence of an $L^p$-analytic semigroup was proven in \cite{Giga81}
via the theory of pseudo-differential operators. For a more elementary
approach based on the Fourier transform and classical multiplier
theorems, see \cite{FaSo94}. Recently, a different method using
the maximal operator was proposed in \cite{Shen12}; however, the
method seems to be limited to bounded domains.

\par
There is also an extensive literature devoted to problems similar
to \eqref{evo1}, but crucially, without the time derivative in the
boundary equation. For example, a fairly complete $L^p$-theory,
including the existence of an analytic semigroup, has been
established in \cite{AACG21}, see also \cite{Ghosh18} and the
references therein; yet the results again seem to be limited
to bounded $\Omega$ only.  In the case of a half-space, 
the problem was treated in \cite{Saal06}, albeit for $\beh=0$ only.
A more modern approach based on the $\hhinf$-calculus was employed here.
More recently, a closely related class of problems with dynamic boundary 
conditions was treated in \cite{BoKaKo17}, and maximal regularity results
were obtained.

\par
A rather comprehensive functional setup for treating resolvent problems
related to dynamic boundary conditions is described in \cite{Denk23};
see also the references therein. This method does provide sharp
regularity estimates in fractional Sobolev spaces, yet direct application
to the Stokes problem -- due to the incompressibility condition and the
pressure term -- does not seem viable. 

\par
Let us also mention recent paper \cite{BDK25}, where the existence
of an analytic semigroup for \eqref{evo1} is proven in the case of smooth
bounded domains, for the Hilbert setting of the maximal regularity
space $L^2(\Omega)\times W^{1/2,2}(\Gamma)$.

\par
Concerning possible applications, we remark that optimal regularity
estimates of the stationary Stokes problem (cf. \autoref{thm:Main2})
can be used to obtain higher regularity of evolutionary nonlinear problems,
via the standard bootstrap argument. See \cite{PZ24} for
the corresponding analysis in bounded domains, where the smoothness
of solutions was used to estimate the dimension of the global attractor,
in the case of the two-dimensional Navier-Stokes equations with dynamic boundary 
conditions.

\section{Notation and main results}	\label{S:2}

We set $\Omega = \{ (x',x_d) \in \rr^{d-1} \times \rr; x_d>0 \}$ and identify $\Bdry = \partial \Omega$ with $\rr^{d-1}$. Furthermore, $\bm n=(0,-1)$ is the outer normal, and
$\tau : \rr^{d} \to \rr^{d-1}$ denotes the tangential component.
For $p\in(1,\infty)$ and integer $k\ge1$, we denote 
by $L^p$ and $W^{k,p}$ the usual Lebesgue and Sobolev
spaces. Negative Sobolev space $W^{-1,p}$ is defined as the completion
of $L^q$ with respect to the norm
\[
	\norm{\bu}{W^{-1,p}(\Omega)}{} = \sup
	\Big\{ \int_\Omega \bu \cdot \bm \varphi \, {\rm d}x; \ \norm{\bm \varphi }{W^{1,p'}(\Omega)}{} \le 1
	\Big\} .
\]
It it the well-known fact (see e.g. \cite[Section 3.14]{AF03}) that $W^{-1,p}$ 
can be identified with the dual to $W^{1,p'}$.
\par
We also need Besov spaces $B^{s,p}_p$ with a general
real $s$; we allow for non-integer $s$ in the spirit of \cite{ADN59}.
The subscripts $\sigma$ or $n$ stand for the solenoidality, and 
zero normal component, respectively. Finally, we will extensively use 
the partial Fourier transform $\mathcal{F}_{x'\to\xi}$, which is defined by
\begin{align*}
\mathcal{F}_{x'\to\xi} \bu (x', x_d) = 
\widehat{\bu}(\xi, x_d) = \int_{\rr^{d-1}} \bu(x', x_d) 
		e^{-ix'\cdot \xi} \, {\rm d}x', \quad \xi \in \rr^{d-1}  .
\end{align*}

\par
Since it will not play any role in our paper, we set $\beta = \nu= 1$ in \eqref{evo1}.  We will be concerned with the related resolvent problem, i.e.
\begin{align}
(\lambda - \Delta) \bu + \nabla \pi &= \bef \quad \text{ in } \Omega , 
\label{Res:inside} \\
\diver \bu &= 0 \quad  \text{ in } \Omega , \label{Res:Div} \\
\bu \cdot \bm n &= 0 \quad  \text{ on } \Gamma , \label{Res:N} \\
(\lambda + \alpha)\bu + 2 [(\Du)\bm n]_{\tau} &= \beh \quad  \text{ on } \Gamma \label{Res:bound} ,
\end{align}
where $\lambda$ is a complex number. Note that $\diver 2 \Du = \Delta \bu$.
Multiplying \eqref{Res:inside} by a divergence-free, smooth, and compactly supported function
$\bvarphi$ with $\bvarphi \cdot \bm n = 0$ on $\Gamma$ and integrating
by parts, one obtains the integral formulation
\begin{equation}\label{res:weak}	
\begin{gathered} 	
\lambda \int_\Omega \bu \cdot \bvarphi \, {\rm d}x
+ (\lambda + \alpha) \int_\Bdry \bu \cdot \bvarphi \, {\rm d}S
+ \int_\Omega 2\Du : {\rm D}\bvarphi \, {\rm d}x
\\
=
 \int_\Omega \bef \cdot \bvarphi \,{\rm d}x
+  \int_\Bdry \beh \cdot \bvarphi \, {\rm d}S .
\end{gathered}
\end{equation}
This problem is easy to handle in the $L^2$-setting, using Korn's inequality, see \cite{AACG21} or \cite{ABM21}. However, our objective is to obtain
the $L^p$-theory, as stated in the following two main theorems.

\begin{theorem}	\label{thm:Main1}
Let $\alpha \ge0$, $p \in (1, + \infty)$, $\bef \in L^p_{\sigma,n}(\Omega)$ and $\beh \in L^p_{n}(\Bdry)$.
Then for any complex $\lambda$ with $\mathfrak{Re}\, \lambda > 0$ there exists 
a unique $\bu \in W^{1,p}_{\sigma,n}(\Omega)$ 
%and (up to a constant) unique $\pi \in L^p(\Omega)$ 
weak solution to \eqref{Res:inside}--\eqref{Res:bound}.
%\begin{equation}	\label{est:weak}
%\norm{\bu}{W^{1,q}(\Omega)}{}  + \norm{\pi}{L^q(\Omega)}{}
%\le C_1 \big(
%\norm{\bef}{L^q(\Omega)}{} + \norm{\beh}{L^q(\Bdry)}{}
%    \big)
%\end{equation}
%where the constant $C_1$ depends on $\lambda$, $\alpha$, $q$ and $d$.
\par
If moreover $|\lambda| > \omega$ for some $\omega>0$ fixed, then
\begin{equation}	\label{est:Res}
\norm{\bu}{L^p(\Omega)}{} + \norm{\bu}{L^p(\Bdry)}{}
\le \frac{C_0}{|\lambda|} \big(
\norm{\bef}{L^p(\Omega)}{} + \norm{\beh}{L^p(\Bdry)}{} \big) ,
	%	+ || g ||_{W^{-1, p} (\Omega)} \big) ,
\end{equation}
where the constant $C_0$ depends on $d$, $p$ and $\omega$,
but is independent of $\lambda$ and $\alpha$.
\end{theorem}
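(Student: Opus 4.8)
The plan is to reduce the problem, via the known theory for the Stokes system with homogeneous Dirichlet condition, to the case $\bef=0$, and to treat the latter by an explicit Fourier representation of the resolvent, in the spirit of \cite{FaSo94,Saal06}. First, for $\bef\in L^p_{\sigma,n}(\Omega)$ and $\lambda\in\SectEps$, let $\bu_D$ (with pressure $\pi_D$) be the unique solution of \eqref{Res:inside}--\eqref{Res:Div} with $\bu_D=0$ on $\Bdry$; by \cite{Giga81,FaSo94} it satisfies the scaled bound $|\lambda|\,\norm{\bu_D}{L^p(\Omega)}{}+|\lambda|^{1/2}\norm{\nabla\bu_D}{L^p(\Omega)}{}+\norm{\nabla^2\bu_D}{L^p(\Omega)}{}\le C\norm{\bef}{L^p(\Omega)}{}$. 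Since $\bu_D$ vanishes on $\Bdry$, the difference $\bm w$ of a solution of \eqref{Res:inside}--\eqref{Res:bound} and $\bu_D$ solves the \emph{same} system with $\bef$ replaced by $0$ and $\beh$ replaced by $\tilde\beh:=\beh-2[(D\bu_D)\bm n]_\tau$; by the trace inequality together with the displayed estimate, $\norm{\tilde\beh}{L^p(\Bdry)}{}\le\norm{\beh}{L^p(\Bdry)}{}+C\norm{\bef}{L^p(\Omega)}{}$ whenever $|\lambda|>\omega$. Hence it suffices to construct, for the $\bef=0$ problem, a solution $\bm w\in W^{1,p}_{\sigma,n}(\Omega)$, $q\in L^p(\Omega)$ and to prove $\norm{\bm w}{L^p(\Omega)}{}+\norm{\bm w}{L^p(\Bdry)}{}\le C|\lambda|^{-1}\norm{\tilde\beh}{L^p(\Bdry)}{}$; adding $\bu_D,\pi_D$ back then gives the theorem.

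Applying the partial Fourier transform in $x'$ to the $\bef=0$ system produces an ODE boundary-value problem on $\{x_d>0\}$: the transformed pressure solves $(\partial_d^2-|\xi|^2)\hat q=0$, forcing $\hat q=c\,e^{-|\xi|x_d}$, and each component of $\hat{\bm w}$ then solves a forced second-order equation with characteristic exponents $\pm\sqrt{\lambda+|\xi|^2}$, so $\hat{\bm w}(\xi,x_d)=\bm a\,e^{-\sqrt{\lambda+|\xi|^2}\,x_d}+(\text{pressure part})\,e^{-|\xi|x_d}$. The $d+1$ unknowns $\bm a,c$ are fixed by the $d-1$ tangential equations of \eqref{Res:bound}, by $\hat w_d(0)=0$ (from \eqref{Res:N}), and by the trace of the divergence constraint; solving, every entry of the resulting symbol matrix $M$ in $\hat{\bm w}(\xi,x_d)=M(\xi,x_d,\lambda)\hat{\tilde\beh}(\xi)$ carries in its denominator the ``Lopatinskii'' factors $\lambda+\alpha+\sqrt{\lambda+|\xi|^2}$ and $\lambda+\alpha+|\xi|+\sqrt{\lambda+|\xi|^2}$. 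These never vanish for $\lambda\in\SectEps$, $\alpha\ge0$, $\xi\in\rr^{d-1}$ (note $\lambda+\alpha\in\SectEps$ with $|\lambda+\alpha|\ge c_\varepsilon|\lambda|$, and $\RE\sqrt{\lambda+|\xi|^2}\ge c_\varepsilon|\lambda|^{1/2}$), and, crucially, $\alpha$ enters only through $\lambda+\alpha$, so all subsequent bounds are automatically uniform in $\alpha\ge0$. This already yields existence for every $\lambda\in\SectEps$: the formula defines $\bm w,q$, and one checks membership in the asserted spaces.

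It remains to extract the estimate. For each fixed $x_d\ge0$ I would verify Mikhlin--H\"ormander (equivalently Marcinkiewicz) bounds of the form $|\partial_\xi^\gamma M(\xi,x_d,\lambda)|\le C\,|\lambda|^{-1}\,|\xi|^{-|\gamma|}\,e^{-c\,x_d\,\RE\sqrt{\lambda+|\xi|^2}}$ for the admissible multi-indices $\gamma$, uniformly in $\lambda\in\SectEps$ with $|\lambda|>\omega$ and in $\alpha\ge0$; the powers of $x_d$ produced when $\partial_\xi$ falls on $e^{-\sqrt{\lambda+|\xi|^2}\,x_d}$ are absorbed via $t\,e^{-at}\lesssim a^{-1}e^{-at/2}$, using $|\lambda+|\xi|^2|\gtrsim\max(|\lambda|,|\xi|^2)$. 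Evaluating at $x_d=0$ and invoking the multiplier theorem on $\rr^{d-1}$ gives $\norm{\bm w(\cdot,0)}{L^p(\Bdry)}{}\le C|\lambda|^{-1}\norm{\tilde\beh}{L^p(\Bdry)}{}$; for general $x_d$ it gives $\norm{\bm w(\cdot,x_d)}{L^p(\Bdry)}{}\le C|\lambda|^{-1}e^{-c|\lambda|^{1/2}x_d}\norm{\tilde\beh}{L^p(\Bdry)}{}$, and raising to the $p$-th power and integrating over $x_d\in(0,\infty)$ yields $\norm{\bm w}{L^p(\Omega)}{}\le C|\lambda|^{-1-\frac1{2p}}\norm{\tilde\beh}{L^p(\Bdry)}{}$, which is more than enough. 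Analogous, but easier, multiplier bounds for $\nabla\bm w$ and $q$ place $\bm w\in W^{1,p}_{\sigma,n}(\Omega)$ and $q\in L^p(\Omega)$, finishing existence and the estimate. Uniqueness follows from the injectivity of the resolvent symbol: the difference of two solutions solves the homogeneous problem, and after the partial Fourier transform, for a.e.\ $\xi\ne0$ its slice satisfies an ODE boundary-value problem whose only tempered (in particular $L^p$) solution under the homogeneous boundary conditions is $0$, since the characteristic exponents have nonzero real part and the Lopatinskii matrix above is invertible; equivalently, for $p=2$ one tests the homogeneous version of \eqref{res:weak} with $\bar\bu$ and uses a density argument.

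I expect the multiplier verification to be the \emph{main obstacle}: checking the H\"ormander conditions for the explicit symbols --- controlling the square-root symbol $\sqrt{\lambda+|\xi|^2}$ and its $\xi$-derivatives uniformly as $\arg\lambda\to\pi-\varepsilon$, managing the interplay of the two scales $|\xi|$ and $|\xi|^2$ against $|\lambda|$, and passing cleanly from the fibrewise $L^p(\rr^{d-1})$ bounds to the $L^p(\Omega)$ norm with the correct power of $|\lambda|$ --- are exactly the computations one should relegate to an appendix.
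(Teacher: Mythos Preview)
Your overall strategy---splitting off the Dirichlet solution and treating the remaining $\bef=0$ problem via explicit Fourier symbols---is exactly the paper's approach. However, the multiplier bound you propose,
\[
|\partial_\xi^\gamma M(\xi,x_d,\lambda)|\le C\,|\lambda|^{-1}\,|\xi|^{-|\gamma|}\,e^{-c\,x_d\,\RE\sqrt{\lambda+|\xi|^2}},
\]
is \emph{false} for the part of the symbol coming from $e^{-|\xi|x_d}$. Concretely, the relevant multiplier for $\hat u_d$ is $i\xi\, m_0$, where $m_0$ contains the difference $e^{-x_d\sqrt{\lambda+|\xi|^2}}-e^{-x_d|\xi|}$; for $|\xi|\ll|\lambda|^{1/2}$ and $x_d\sim|\xi|^{-1}$ this is of order $|\xi|\,|\lambda|^{-1}\,e^{-1}$, while your bound would force it below $|\lambda|^{-1}e^{-c|\lambda|^{1/2}/|\xi|}$, which is exponentially small. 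Consequently the fibrewise estimate $\norm{\bm w(\cdot,x_d)}{L^p(\Bdry)}{}\le C|\lambda|^{-1}e^{-c|\lambda|^{1/2}x_d}\norm{\tilde\beh}{L^p(\Bdry)}{}$ and the improved rate $|\lambda|^{-1-1/(2p)}$ are not available. The paper handles this by proving instead the weaker (but sufficient) condition \eqref{eq:ConditionM*}, namely $|\xi|^k|\nabla'^k m^*(\xi,x_d)|\le \overline c\,(1+x_d)^{-1}e^{-\delta|\xi|x_d}$, which yields only $\norm{\bm w(\cdot,x_d)}{L^p(\Bdry)}{}\le C|\lambda|^{-1}(1+x_d)^{-1}\norm{\tilde\beh}{L^p(\Bdry)}{}$ and hence the plain $|\lambda|^{-1}$ estimate after integrating in $x_d$. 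The key technical point you are missing is that the $(1+x_d)^{-1}$ factor is extracted from the \emph{cancellation} in $E(\xi,x_d)=e^{-|\xi|x_d}-e^{-x_d\sqrt{\lambda+|\xi|^2}}$, via the contour-integral identity $E=-\int y e^{-ty}\,{\rm d}t$ giving $|E|\le\sqrt{|\lambda|}\,x_d\,e^{-|\xi|x_d}$, together with the extra factor of $|\xi|$ or $\sqrt\lambda$ already present in the symbol.

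A smaller issue: your uniqueness argument by Fourier slicing (``for a.e.\ $\xi\neq0$'') is not directly meaningful for $L^p$ with $p>2$, since $\widehat\bu$ need not be a function. The paper instead runs a clean duality argument: solve the adjoint problem in $L^{p'}$ with data $|\bu|^{p-2}\overline\bu$, and test the two weak formulations against each other.
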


\begin{theorem} \label{thm:Main2} Let $\alpha \ge 0$, $p\in (1, + \infty)$ 
and $\lambda \in \mathbb{C}$ with $\mathfrak{Re}\, \lambda >0$.
\begin{itemize}
    \item[(i)]   Let $\bef \in W^{-1,p}(\Omega)$, and let $\beh \in 
B^{-1/p,p}_{p;n}(\Bdry)$. Then one has the estimate
\begin{equation} 	\label{est:Reg1}
\norm{\bu}{W^{1,p}(\Omega)}{} %+ \norm{\pi}{L^p(\Omega)}{}
	\le C_1 \big(
\norm{\bef}{W^{-1,p}(\Omega)}{} + \norm{\beh}{B^{-1/p,p}_p(\Bdry)}{} \big) .
 %%	+ || g ||_{L^{p} (\Omega)} + || g ||_{W^{-1, p} (\Omega)} \big) .
\end{equation}
    \item[(ii)] Let $\bef \in L^p_{\sigma,n}(\Omega)$ and 
$\bm h \in B^{1-1/p,p}_{p;n}(\Bdry)$. Then the solution to resolvent
problem \eqref{Res:inside}--\eqref{Res:bound} is strong, and one has the estimate
\begin{align} \label{est:Reg2}
%\begin{split} 
\norm{\bu}{W^{2,p}(\Omega)}{} + \norm{\nabla \pi}{L^p(\Omega)}{}
	& \le C_2 \big(
\norm{\bef}{L^p(\Omega)}{} + \norm{\beh}{B^{1-1/p,p}_p(\Bdry)}{}  \big) .
%% \quad  + || \nabla g ||_{L^{p} (\Omega)} + || g ||_{W^{-1, p} (\Omega)}	\big) . 
%\end{split}
\end{align}
\item[(iii)] If \eqref{Res:Div} is generalized to $\diver \bu = g$, 
the estimates remain valid, with the norm of $g$ in $L^p(\Omega)$
or $W^{1,p}(\Omega)$ added to the right-hand side of \eqref{est:Reg1}
or \eqref{est:Reg2}, respectively.
\end{itemize}
The constants $C_1$, $C_2$ depend on $d$, $p$ and $|\lambda|$.
\end{theorem}

Postponing proofs of these theorems to Section~\ref{S:5},
we can now establish the existence of an analytic semigroup.

\begin{cor} \label{thm:Cor1}
System \eqref{evo1} generates an analytic semigroup 
in $L^p_{\sigma,n}(\Omega) \times L^p_n(\Gamma)$, where the domain of the
generator is contained in $B^{1+1/p-\varepsilon,p}_p(\Omega)$,
$\varepsilon>0$ arbitrary.
\end{cor}
\begin{proof}
Set $X = L^p_{\sigma,n}(\Omega) \times L^p_n(\Gamma)$, 
and define $A\bu = (\bef,\beh)$ by the (weak) formulation
of the resolvent, i.e. \eqref{res:weak}. We set
\[
	\mathcal{D}(A) = \{ \bu \in W^{1,p}_{\sigma,n}(\Omega) ;
	\ A\bu \in X \}
\]
-- with a slight abuse of notation, we identify the Sobolev
function $\bu$ with the couple $(\bu,\operatorname{tr} \bu)$.
It is easy to verify that $A$ is densely defined, and
the closedness follows from the resolvent estimate \eqref{est:Res};
see also \eqref{est:Res-w1} below.
By a standard result (see e.g. \cite[Corollary 3.7.14]{ABBN01}), 
the existence of semigroup follows.
\par
Let us identify the domain of $A$. In view of uniqueness
(cf.\ the argument after \eqref{est:Res-w1} below), we can treat
the regularity with respect to $\bef \in  L^p_{\sigma,n}(\Omega)$ and 
$\beh \in L^p_{n}(\Gamma)$ separately. If $\beh=0$, if follows from
\eqref{est:Reg2} that $\bu \in W^{2,p}(\Omega)$. However,
in contrast to the Dirichlet problem, this cannot be the domain since
\eqref{Res:bound} would then imply $\beh \in B^{1-1/p,p}_{p}(\Bdry)$.
\par
In fact, we claim that for any $\varepsilon > 0$
\begin{equation}	\label{domain-eps}
	B^{1+1/p+\varepsilon,p}_{p;\sigma,n}(\Omega) \subset
		\mathcal{D}(A)	\subset
	B^{1+1/p-\varepsilon,p}_{p;\sigma,n}(\Omega) .
\end{equation}
Indeed, interpolating \eqref{est:Reg1}--\eqref{est:Reg2} with
$\bef=0$, we get the second inclusion. On the other hand,
if $\bu \in B^{1+1/p+\varepsilon,p}_p(\Omega)$, then
the left-hand side of \eqref{Res:bound} belongs to 
$B^{\varepsilon,p}_p(\Gamma)$, in contradiction to
the fact that in general, $\beh \in L^p(\Gamma)$ only.
\par
Note that if $p=2$, one has the equality
$\mathcal{D}(A)=W^{3/2,2}_{\sigma, n}(\Omega)$,
as follows from \autoref{lem:fundH2} below.
\end{proof}

\section{Fundamental solution}	\label{S:3}
Here, we deal with \eqref{Res:inside}--\eqref{Res:bound} for $\bm f = 0$ and $\bm h = \bm \Phi = (\bm \phi , 0)$, i.e. we consider the system
\begin{align}
(\lambda - \Delta) \bu + \nabla \pi &= 0 \quad \text{ in } \Omega , \label{eq:InsideEquation} \\
\diver \bu &= 0 \quad  \text{ in } \Omega , \label{eq:Div} \\
\bu \cdot \bm n &= 0 \quad  \text{ on } \Gamma , \label{eq:Permeability} \\
(\lambda + \alpha)\bu + 2 [(\Du)\bm n]_{\tau} &= \bm \Phi \quad  \text{ on } \Gamma \label{eq:BoundaryEquation} .
\end{align}
We will (formally) solve the problem, using Fourier analysis.

\subsection{Reducing the equations}
By $\diver'$, $\Delta'$ we denote the corresponding operators with respect to $x'$. Let us now simplify \eqref{eq:Permeability}--\eqref{eq:BoundaryEquation}. First, $\bm n = ( 0, -1)$, and thus
\begin{equation*}
\bu \cdot \bm n = 0 \Leftrightarrow u_d = 0 \text{ on } \Gamma .
\end{equation*}
Hence, 
\begin{align*}
2(\Du)\bm n &=   
- \begin{pmatrix}
\partial_d  u_1 + \partial_1 u_d \\
\partial_d  u_2 + \partial_2 u_d \\
\vdots \\
\partial_d  u_{d-1} + \partial_{d-1} u_d \\
2 \partial_d  u_d
\end{pmatrix} 
=  
- \begin{pmatrix}
\partial_d  \bu'  \\
2 \partial_d  u_d
\end{pmatrix} 
\end{align*}
and
\begin{equation*}
2 [(\Du) \bm n]_{\tau} = 2(\Du)\bm n  - 2((\Du) \bm n  \cdot \bm n) \bm n = 
- \begin{pmatrix}
\partial_d  \bu'  \\
0
\end{pmatrix} . 
\end{equation*}
Therefore, we can rewrite \eqref{eq:BoundaryEquation} as follows
\begin{align}
(\lambda + \alpha) \bu' - \partial_d  \bu'  &= \bm \phi \quad \text{ on } \Gamma. \label{eq:BoundaryEquation2} 
\end{align}

Now, we aplly $\partial_d  \diver'$ to the first $(d-1)$ equations of \eqref{eq:InsideEquation} and $-\Delta'$ to the last one; adding these equations together we obtain
\begin{align*}
 (\lambda - \Delta) & \left[ \partial_d (\partial_1 u_1 + \partial_2 u_2   + \cdots  + \partial_{d-1} u_{d-1} ) - \Delta' u_d  \right]  \\
& \qquad \quad  + \left[ \partial_d (\partial_1^2 \pi + \partial_2^2 \pi  + \cdots  + \partial_{d-1}^2 \pi ) -  \Delta' \partial_d  \pi  \right] = 0 .
\end{align*}
The pressure term clearly vanishes and in the first bracket we use $\diver \bu = 0 \Leftrightarrow \diver' \bu' = -\partial_d  u_d  $ to get
\begin{align*}
- (\lambda - \Delta)\Delta u_d & = 0 \quad \text{ in } \Omega , \\
u_d &= 0 \quad \text{ on } \Gamma .
\end{align*}
Finally, we deal with the boundary condition \eqref{eq:BoundaryEquation2}. Applying $-\diver'$ yields
\begin{align*}
-(\lambda + \alpha)\, \diver'  \bu' +  \partial_d \diver' \bu' = -\diver' \bm \phi , 
\end{align*}
and thanks to $\diver' \bu' = -\partial_d  u_d$ we achieve
\begin{align*}
(\lambda + \alpha - \partial_d )  \partial_d u_d = -\diver' \bm \phi \quad \text{ on } \Gamma .
\end{align*}
Therefore, we reduced the whole problem to the question of solving the following system for $u_d$
\begin{align}
\begin{split}
- (\lambda - \Delta)\Delta u_d & = 0 \quad \text{ in } \Omega , \\
(\lambda + \alpha - \partial_d )  \partial_d u_d & = -\diver' \bm \phi \quad \text{ on } \Gamma , \\
u_d & = 0 \quad \text{ on } \Gamma .
\end{split} \label{eq:EquationForTheLastComponent}
\end{align}

\subsection{Fourier transform}
We solve \eqref{eq:EquationForTheLastComponent} by applying the Fourier transform $\mathcal{F}_{x' \to \xi}$. We need to deal with the system
\begin{align}
\begin{split}
(\lambda + |\xi|^2 - \partial_d ^2) (|\xi|^2 - \partial_d ^2)\widehat{u_d} &= 0 \quad \text{ for } x_d > 0 , \\
(\lambda + \alpha - \partial_d ) \partial_d  \widehat{u_d} &= -i \xi \cdot \widehat{\bm \phi} \quad \text{ for } x_d = 0 , \\
\widehat{u_d} (\xi , 0) &= 0 ,
\end{split}  \label{eq:FourierSystem}
\end{align}
for any $\xi \in \mathbb{R}^{d-1}$. Here, the characteristic roots are
\[ \pm \sqrt{\lambda + |\xi|^2} \quad \text{ and } \pm |\xi|  \,; \]
certainly, only the terms with the minus sign are relevant.
It will be useful to introduce the function
\begin{align}
m_0 (\lambda, \xi, x_d) = \frac{e^{-x_d \sqrt{\lambda + |\xi|^2} } - e^{-x_d |\xi|}}{\lambda + (\lambda + \alpha) ( \sqrt{\lambda + |\xi|^2} - |\xi|)} \, , \label{eq:Function}
\end{align}
which is nothing else than the fundamental solution to \eqref{eq:FourierSystem}, i.e. one has
 \begin{align*}
(\lambda + |\xi|^2 - \partial_d ^2) (|\xi|^2 - \partial_d ^2)m_0 &= 0 \quad \text{ for } x_d > 0 , \\
(\lambda + \alpha - \partial_d ) \partial_d  m_0 &= -1 \quad \text{ for } x_d = 0 , \\
m_0 (\lambda, \xi , 0) &= 0 .
\end{align*}
The solution for the last component is thus
\begin{align*}
\widehat{u_d}(\xi, x_d)  = i\xi \cdot m_0 (\lambda, \xi, x_d) \widehat{\bm \phi}(\xi)  .
\end{align*}

Now, we need to express  $\widehat{\pi}$ and $\widehat{\bm u'}$ using $m_0$. We start with the pressure. Let us apply $\diver'$ to the first $(d-1)$ equations of \eqref{eq:InsideEquation}; we get
\begin{align*}
\Delta' \pi = -\diver' (\lambda - \Delta) \bu' \quad \text{ in } \Omega .
\end{align*}
Because of $\diver' \bu' = -\partial_d  u_d$ we rewrite it in the form
\begin{align*}
\Delta' \pi =  (\lambda - \Delta) \partial_d  u_d \quad \text{ in } \Omega ,
\end{align*}
and use the Fourier transform to achieve
\begin{align*}
 \widehat{\pi}  =  -\frac{1}{|\xi|^2} (\lambda+|\xi|^2 - \partial_d^2)  \partial_d \widehat{u_d} .
\end{align*}
In other words
\begin{align*}
 \widehat{\pi} (\xi, x_d) 
 &=  -(\lambda+|\xi|^2 - \partial_d^2)  \partial_d m_0 (\lambda, \xi, x_d) \frac{ i\xi }{|\xi|^2}  \cdot \widehat{\bm \phi} (\xi) \\
 &= -\frac{\lambda e^{-x_d |\xi| } }{\lambda + (\lambda + \alpha) ( \sqrt{\lambda + |\xi|^2} - |\xi|)}  \frac{ i\xi }{|\xi|}  \cdot \widehat{\bm \phi} (\xi)  . 
\end{align*}

Up to now, everything was dimension-independent. At this point, 
we need to express the function $\bu'$. If $d = 2$, then everything is simpler. From \eqref{eq:Div} we easily find that
$$ \widehat{u_1}(\xi,x_d) = - \partial_{2} m_0 (\lambda, \xi, x_2) 
\widehat{\bm \phi}(\xi) . $$

In the general case $d > 2$ we start in the same way, i.e. we take the Fourier transform of \eqref{eq:Div} and express our unknown vector field 
$$ i \xi \cdot \widehat{\bu'} + \partial_d \widehat{u_d} = 0 \Leftrightarrow  \widehat{\bu'} =  \frac{i\xi }{|\xi|^2} \partial_d \widehat{u_d}  ,$$
which yields
\begin{align*}
 \widehat{\bu'} (\xi, x_d)  =-  \partial_d m_0 (\lambda, \xi, x_d) \frac{ \xi \otimes \xi }{|\xi|^2} \widehat{\bm \phi}(\xi)\, .
\end{align*}
This yields the solution to the Stokes system, but our condition \eqref{eq:BoundaryEquation} is not satisfied (in general). We need to add a correction term $\bm v = (\bm v', 0)$. To this purpose, consider the following problem
\begin{align*}
(\lambda - \Delta) \bv' & = 0 \quad \text{ in } \Omega , \\
\diver \bv' &= 0 \quad \text{ in } \Omega , \\
(\lambda + \alpha) \bv' - \partial_d \bv' &= \bm h' \quad \text{ on } \Gamma ,
\end{align*}
where 
$$ \widehat{\bm h'} = \widehat{\bm \phi} -  \frac{ \xi \otimes \xi }{|\xi|^2} \widehat{\bm \phi} .  $$

Now, by the Fourier transform we get
\begin{align*}
(\lambda + |\xi|^2 - \partial_d^2) \widehat{\bv'} & = 0 \quad \text{ in } \Omega , \\
(\lambda + \alpha - \partial_d) \widehat{ \bv'} &= \widehat{\bm h'} \quad \text{ on } \Gamma  .
\end{align*}
The characteristic roots are now $\pm \sqrt{\lambda + |\xi|^2}$, and thus
\[ \widehat{\bv'} (\xi, x_d) = \bm b(\xi) e^{-x_d \sqrt{\lambda + |\xi|^2}}\,.
\]
From the boundary condition we find
$$  \bm b(\xi) = \frac{1}{\lambda + \alpha + \sqrt{\lambda + |\xi|^2}} 
	\widehat{\bm h'}(\xi) . $$
Together,
\begin{align*}
 \widehat{\bu'} (\xi, x_d)  &= - \partial_d m_0 (\lambda, \xi, x_d) \frac{ \xi \otimes \xi }{|\xi|^2} \widehat{\bm \phi}(\xi) 
\\ &+ \frac{1}{\lambda + \alpha + \sqrt{\lambda + |\xi|^2}}\left(\text{Id}_{d-1}\, - \frac{\xi \otimes \xi }{|\xi|^2}\right) e^{-x_d \sqrt{\lambda + |\xi|^2}} \widehat{\bm \phi}(\xi) \, .
\end{align*}
To conclude this section, we summarize that for $d \geq 2$, 
the solution to \eqref{eq:InsideEquation}--\eqref{eq:BoundaryEquation}
can be expressed as 
\begin{equation} \label{eq:Multipliers} 
\begin{aligned}
 \widehat{\bu'} (\xi, x_d)  & =-  \partial_d m_0 (\lambda, \xi, x_d) \frac{ \xi \otimes \xi }{|\xi|^2} \widehat{\bm \phi}(\xi)  \\
 & + \frac{1}{\lambda + \alpha + \sqrt{\lambda + |\xi|^2}}\left(\text{Id}_{d-1}\, - \frac{ \xi \otimes \xi }{|\xi|^2}\right) e^{-x_d \sqrt{\lambda + |\xi|^2}} \widehat{\bm \phi}(\xi) , \\
\widehat{u_d}(\xi, x_d) & = i\xi \cdot m_0 (\lambda, \xi, x_d) \widehat{\bm \phi}(\xi) , \\
 \widehat{\pi} (\xi, x_d) &=  -\frac{\lambda e^{-x_d |\xi| } }{\lambda + (\lambda + \alpha) ( \sqrt{\lambda + |\xi|^2} - |\xi|)}  \frac{ i\xi }{|\xi|}  \cdot \widehat{\bm \phi} (\xi)  ,
\end{aligned}
\end{equation}
where $m_0$ was defined in \eqref{eq:Function}.
For $d = 2$, the second part of the first right-hand side vanishes.

\section{Multipliers and $L^p$-bounds}	\label{S:4}

The aim of this section is to establish $L^p$-estimates of the solution
obtained above. To begin with, we note that $m_0$ only depends on $z=|\xi|$. 
Setting also for simplicity $y = x_d$, with a slight abuse of notation we
can write
\begin{align}		\label{def-m0}
m_0 = m_0(\lambda,z,y) &= 
\frac{ \omega(\lambda,z) + z }{\alpha + \lambda + \omega(\lambda,z) + z} \cdot
\frac{ e^{-y \omega(\lambda,z)} - e^{-yz}}{\lambda} ,
\end{align}
where $\omega(\lambda,z) = \sqrt{\lambda + z^2}$. From relations \eqref{eq:Multipliers} it follows that
\begin{equation} \label{mult:simpler}
\begin{aligned}
 \widehat{\bu'} (\xi, y)  & = 
m_2 (\lambda,|\xi|, y) \frac{ \xi \otimes \xi }{|\xi|^2} \widehat{\bm \phi}(\xi) 
 + m_3(\lambda,|\xi|, y)  
\left(\text{Id}_{d-1} - \frac{ \xi \otimes \xi }{|\xi|^2}\right) 
\widehat{\bm \phi}(\xi) , 
\\
\widehat{u_d}(\xi, y) & = m_1(\lambda,|\xi|, y) \frac{i\xi}{|\xi|} \cdot
		\widehat{\bm \phi}(\xi) ,
\\
\widehat{\pi}(\xi,y) &= m_4(\lambda,|\xi|,y) \frac{i\xi}{|\xi|} \cdot
        \widehat{\bm \phi}(\xi) ,
\end{aligned}
\end{equation}
where we have set 
\begin{align}	
\begin{split}
	m_1(\lambda,z,y) &= z m_0(\lambda,z,y)	, 	\\
	m_2(\lambda,z,y) &= - \partial_y m_0(\lambda,z,y), \\
	m_3(\lambda,z,y) &= \frac{1}{\alpha + \lambda + \omega(\lambda,z)} \,
		e^{-y\omega(\lambda,z)} , \\
	m_4(\lambda,z,y) &= -\frac{ \omega(\lambda,z) + z }{\alpha + \lambda + \omega(\lambda,z) + z} \,
 e^{-yz} . \label{def-m1-m4}
\end{split}
\end{align}
As the mutlipliers $\xi/|\xi|$ and $\xi \otimes \xi / |\xi|^2 $
are easily seen to satisfy the Mikhlin condition \eqref{mich}, 
it is enough to treat just the terms $m_i$, $i=1,\dots,4$.  
Denote 
\begin{equation}	\label{df:sector}
\Sigma_{\theta} = \{ z \in \cc \setminus \{ 0 \} ; \
				|\arg z | < \theta \} ,
\end{equation}
Let $\hhinf(\Sigma_\theta)$ be the set of all bounded and analytic functions
in $\Sigma_\theta$.  We will use repeatedly the following estimate,
which follows easily from the  $\hhinf$-calculus for the operator
$(-\Delta)^{1/2}$ on $L^p$, see e.g. \cite{KuWe04}. For reader's
convenience, we provide a short proof based on the classical 
Mikhlin's theorem.
\begin{lemma}	\label{lem:mult2}
Let $\widehat{\bu}(\xi, y) = m(|\xi|, y) \widehat{\bm \phi}(\xi)$, 
where $m(\cdot, y) \in \hhinf(\Sigma_\theta)$ for each $y \ge 0$ fixed.
Set 
\begin{equation}
	k(y) = \sup_{ z\in \Sigma_\theta} |m(z,y)| \,,
	\qquad y \ge 0 \,.
\end{equation}
Then one has the estimate
\begin{equation}	\label{est-ky}
\norm{\bu}{L^p(\Gamma)}{} + 
\norm{\bu}{L^p(\Omega)}{} \le C_{p,d}\, 
	\big( k(0) + \norm{k}{L^p(0,\infty)}{} \big) 
							 \norm{\bm \phi}{L^p(\rr^{d-1})}{} .
\end{equation}
Furthermore, if under these assumptions $\widehat{\bu}(\xi, y) = m(|\xi|, y) \widehat{\bm v}(\xi,y)$
for any $y>0$ fixed, then
\begin{equation}	\label{est-ki}
	\norm{\bu}{L^p(\Omega)}{} \le C_{p,d} \norm{k}{L^{\infty}(0,\infty)}{}
			\norm{\bm v}{L^p(\Omega)}{} .
\end{equation}
\end{lemma}

\begin{proof}
Let $y \ge0$  be fixed. We want to verify the Mikhlin condition
\begin{equation}	\label{mich}
	|\xi|^k |\nabla^k m(|\xi|, y)| \le C_k (y), \qquad
		\xi \neq 0,\ k\ge0 ,
\end{equation}
see e.g. \cite[Theorem 5.2.7]{Graf01}. It follows once we prove 
\begin{equation*}	
	|s^k m^{(k)}(s, y) | \le C (y) \,,
		\qquad s \in (0,\infty) .
\end{equation*}
Let $s\in(0,\infty)$ be fixed, and let $\Gamma_s$ be the largest circle
centered in $s$, and contained in $\Sigma_\theta$. By Cauchy's
theorem
\[
s^k m^{(k)}(s, y) = \frac{1}{2\pi i} \int_{\Gamma_s}
\frac{s^k m(z, y)}{(z-s)^{k+1}}  \, {\rm d}z .
\]
Now it is easy to see that the right-hand side has a bound
only depending on $\theta$ and the supremum of $m(\cdot, y)$.

As the Mikhlin condition is satisfied, we see that
\[
	\norm{ \bu(\cdot, y)}{L^p(\rr^{d-1})}{}
		\le C_{p,d}\, k(y) \norm{\bm \phi}{L^p(\rr^{d-1})}{} 
\]
holds. The conclusion is immediate by Fubini's theorem.
\end{proof}

The following two lemmas are the main technical estimates
of the paper.

%%%%%%%%%%%%%%%%%%%%%%%%%%%%%%%%%%%%%%%%%%%%%%%%%%%%%%%%%%%%
\begin{lemma}	\label{thm:Lemma1}
Let $\theta \in (0,\pi/4)$ be fixed. Then there exist constants $C$,
$c > 0$ and $\delta>0$ such that
\begin{align}	\label{est:omega}
	\mathfrak{Re}\, \omega(\lambda,z) &\ge c ( z + \sqrt{|\lambda|} ) ,
\\				\label{est:frac1}
	\left| \frac{z \left( e^{-y\omega(\lambda,z)} - e^{-yz}\right)}{\omega(\lambda,z) - z } \right| 
		&\le \frac{C e^{-\delta y |z|}}{1 + y\sqrt{|\lambda|}} \, ,
\\				\label{est:frac2}
	\frac{1}{| \alpha + \lambda + z + \omega(\lambda,z)|} 
		&\le \frac{C}{\alpha + |\lambda|}  \, ,
\\
	\label{est:frac3}
	\frac{1}{| \alpha + \lambda + z + \omega(\lambda,z)|} 
		&\le \frac{C}{\alpha + |z|^2}  \, ,		
\end{align}
for all $z\in\Sigma_\theta$, $y\ge0$ and $\lambda \in \Sigma_{\pi/2}$.
\end{lemma}

\begin{proof}
For \eqref{est:omega} see Lemma 5.2(b), (c) in \cite{Saal06}, and \eqref{est:frac1} follows from Lemma 5.3(b) therein. To show \eqref{est:frac2}, we proceed as follows.

Since $\lambda \in \Sigma_{\pi/2}$ and $z \in \Sigma_{\theta}$ for some $\theta < \pi/4$, we have $\lambda + z^2 \in \Sigma_{\pi/2}$. Hence $\omega (\lambda, z) \in \Sigma_{\pi/4}$ and $z + \omega (\lambda, z) \in \Sigma_{\pi/4}$. As $\frac{1}{\alpha + \lambda} \in \Sigma_{\pi/2}$, we get $\frac{z + \omega (\lambda, z)}{\alpha + \lambda} \in \Sigma_{3\pi/4}$, which means that this value is strictly away from $-1$, in particular
\begin{align*}
\Big|1 + \frac{z + \omega (\lambda, z)}{\alpha + \lambda} \Big| \geq C
\end{align*}
holds for some $C > 0$. Finally, 
\begin{align*}
\Big|\alpha + \lambda + z + \omega (\lambda, z) \Big| 
&\geq C |\alpha + \lambda| 
= C \sqrt{ (\alpha + \mathfrak{Re}\, \lambda)^2 + \mathfrak{Im}^2\, \lambda }  \\
&\geq C \sqrt{ \alpha^2 + |\lambda|^2  }
\geq C (\alpha + |\lambda|) ,
\end{align*}
where we used that $\alpha$ and $\mathfrak{Re}\, \lambda $ are positive.

Finally, as $z - z^2 + \lambda + \omega (\lambda, z) \in \Sigma_{\pi/2}$ and $\alpha + z^2 \in \Sigma_{2\theta}$, we have
\begin{align*}
\Big|1 + \frac{z - z^2 + \lambda + \omega (\lambda, z)}{\alpha + z^2} \Big| \geq C ,
\end{align*}
and the \eqref{est:frac3} follows as before.
\end{proof}

\begin{lemma}	\label{thm:Lemma2}
Let $\theta \in (0,\pi/4)$ be fixed. Then there exist constants $C > 0$
 and $\delta>0$ such that for all $z\in\Sigma_\theta$, $y\ge0$ 
and $\lambda \in \Sigma_{\pi/2}$
\begin{align}	\label{est:mi-first}
	|m_i(\lambda,z,y)| &\le 
\frac{C e^{-\delta y |z|}}{(1 + y\sqrt{|\lambda|})(\alpha+|\lambda|)} \, ,
\\				\label{est:mi-second}
	|z m_i(\lambda,z,y)| + |\partial_y m_i(\lambda,z,y)|
			&\le 
\frac{C e^{-\delta y |z|}}{(1 + y\sqrt{|\lambda|})\sqrt{\alpha+|\lambda|}} \, ,
\end{align}
for $i=1$, $2$, $3$ and
\begin{equation}	\label{est:m4}
	|m_4(\lambda,z,y)| %+ |\partial_y m_4(\lambda,z,y)|  
	\le C e^{-\delta y |z|} \,.
\end{equation}
\end{lemma}

\begin{proof}
We will first prove the bounds for $m_i$ and $zm_i$, for all $i$. The bound for $\partial_y m_i$ will be a simple consequence of these. 

\underline{Ad $m_1$.} Using \eqref{est:frac1} and \eqref{est:frac2} we find
\begin{align*}
| m_1 (\lambda, z, y) | 
&= | z  m_0 (\lambda, z, y) |
 \leq \frac{C}{|\lambda|} \cdot \frac{e^{-\delta |z| y }}{1+y\sqrt{|\lambda|}}  \Big| \frac{(\omega (\lambda, z) - z)(\omega (\lambda, z)  + z)}{\alpha + \lambda + z +  \omega (\lambda, z)} \Big| \\
&\leq C  \frac{e^{-\delta |z| y }}{1+y\sqrt{|\lambda|}} \cdot  \frac{1}{ | \alpha + \lambda + z +  \omega (\lambda, z) | }  \\
&\leq   \frac{C}{ (1+y\sqrt{|\lambda|}) ( \alpha + |\lambda| )  } e^{-\delta |z| y} .
\end{align*}

Similarly, using \eqref{est:frac2} and \eqref{est:frac3} together, we get 
\begin{align*}
|  z  m_1 (\lambda, z, y) | 
& \leq C  \frac{e^{-\delta |z| y }}{1+y\sqrt{|\lambda|}} \cdot  \frac{|z| }{|\alpha + \lambda + z +  \omega (\lambda, z)|}  \\
& \leq C  \frac{e^{-\delta |z| y }}{1+y\sqrt{|\lambda|}} \cdot  \frac{1}{\sqrt{\alpha + |\lambda|}} \cdot \frac{|z|}{\sqrt{\alpha + |z|^2}} \,  ,
\end{align*}
from which the estimate for $z m_1$ follows.

\underline{Ad $m_3$.} Due to \eqref{est:omega} and the inequality $(1 + c y \sqrt{|\lambda|}) e^{-c y  \sqrt{|\lambda|} } \leq  1 + \frac{1}{e} = C'$, we see that
\begin{align*}
|e^{-y \omega (\lambda, z)} |
\leq  e^{-c y (|z| + \sqrt{|\lambda|}) }
=e^{-cy\sqrt{|\lambda|} } \cdot e^{-c y |z|  }
\leq \frac{C'}{1+c y \sqrt{|\lambda|}} e^{- \delta y |z| } \, ,
\end{align*}
where we assumed $\delta$ to be small enough. Thanks to an obvious analogue of \eqref{est:frac2}, we obtain
\begin{align*}
|  m_3 (\lambda, z, y) | 
&=\Big|  \frac{1}{\alpha + \lambda + \omega (\lambda, z)} e^{-y \omega (\lambda, z)} \Big|
\leq  \frac{C'}{ | \alpha + \lambda + \omega (\lambda, z)|} \cdot \frac{e^{- \delta y |z| }}{1+c y \sqrt{|\lambda|}}  \\
&\leq  \frac{C}{(1+c y \sqrt{|\lambda|})( \alpha + |\lambda| ) } e^{- \delta y |z| } . 
\end{align*}

Next, using a modification of both \eqref{est:frac2} and \eqref{est:frac3}, we find
\begin{align*}
|  z m_3 (\lambda, z, y) | 
&\leq C' \frac{ |z|}{ | \alpha + \lambda + \omega (\lambda, z)|} \cdot \frac{e^{- \delta y |z| }}{1+c y \sqrt{|\lambda|}}  \\
&\leq  \frac{C}{(1+c y \sqrt{|\lambda|}) \sqrt{\alpha + |\lambda|} } e^{- \delta y |z| } . 
\end{align*}

\underline{Ad $m_2$.} We have
\begin{align*}
 m_2(\lambda, z, y) &= \partial_y  m_0(\lambda, z, y) \\
&= \frac{1}{\lambda} \cdot \frac{z + \omega (\lambda, z)  }{\alpha + \lambda + z + \omega (\lambda, z)} \cdot \frac{\partial }{\partial y}  ( e^{-y \omega (\lambda, z)} - e^{-y z} ) \\
&= \frac{1}{\lambda} \cdot \frac{z + \omega (\lambda, z)  }{\alpha + \lambda + z + \omega (\lambda, z)} \cdot  ( -\omega (\lambda, z) e^{-y \omega (\lambda, z)} + ze^{-y z} \pm z e^{-y \omega (\lambda, z)}) \\
&=  \frac{z + \omega (\lambda, z)  }{\alpha + \lambda + z + \omega (\lambda, z)} \cdot \frac{  z (e^{-y z} - e^{-y \omega (\lambda, z)}) + (z - \omega (\lambda, z)) e^{-y \omega (\lambda, z)} } {\lambda} \\
&= - m_1(\lambda, z, y) - \frac{1}{\alpha + \lambda + z + \omega (\lambda, z)} e^{-y \omega (\lambda, z)} \, .
\end{align*}
We have already shown that the first term is bounded, and the second one can be estimated in the same way as  $m_3(\lambda, z, y)$. Similarly,  
\begin{align*}
z m_2(\lambda, z, y) =  z\partial_y  m_0(\lambda, z, y)
&= -  z m_1(\lambda, z, y) - \frac{z}{\alpha + \lambda + z + \omega (\lambda, z)} e^{-y \omega (\lambda, z)} 
\end{align*}
is bounded as both terms are bounded due to the previous parts. 

In the above, we proved \eqref{est:mi-first} and the first part of \eqref{est:mi-second}; it remains to estimate $\partial_y m_i$. For $i = 1$, we have
\begin{align*}
 \partial_y m_1 (\lambda, z, y) =  z m_2 (\lambda, z, y),
\end{align*}
hence, the bound of this follows from the already proven estimate. Next, for $i = 3$, we get
\begin{align*}
 \partial_y m_3  (\lambda, z, y)
&= -\frac{ \omega (\lambda, z) }{\alpha + \lambda + \omega (\lambda, z)} e^{-y \omega (\lambda, z)} .
\end{align*}
There obviously holds $|\omega (\lambda, z) | \leq C (\sqrt{|\lambda|}+|z|)$. Hence, the boundedness of $ \partial_y m_3 $ follows from already established inequalities for $i = 3$. Finally, for $i = 2$, we can compute that
\begin{align*}
 \partial_y m_2 (\lambda, z, y)
&=  - \partial_y m_1(\lambda, z, y) + \frac{\omega (\lambda, z)}{\alpha + \lambda + z + \omega (\lambda, z)}   e^{-y \omega (\lambda, z)} .
\end{align*}
Invoking \eqref{est:frac3} for $i = 1$, $3$, the estimate for $m_2$
is complete.
\par
Concerning $m_4$, it is enough to invoke \eqref{def-m1-m4} and note
that $|e^{-yz}| = e^{-(\cos\theta)y}$ whenever $z\in \Sigma_\theta$.
\end{proof}

We can now state and prove the following key result.
\begin{lemma}	\label{lem:fundLp}
The solution $\bu$ to \eqref{eq:InsideEquation}--\eqref{eq:BoundaryEquation}
satifies the estimates
\begin{align}	\label{est-fund1}
\norm{\bu}{L^p(\Bdry)}{} + \norm{\bu}{L^p(\Omega)}{}
&\le \frac{K_1}{|\lambda|} \norm{\bm \phi}{L^p(\Bdry)}{} ,
\\				\label{est-fund2}
\norm{\nabla \bu}{L^p(\Omega)}{} 
	&\le \frac{K_2}{\sqrt{|\lambda|}} \norm{\bm \phi}{L^p(\Bdry)}{} .
\end{align}
Here the constants $K_1$ and $K_2$ depend on $p$ and $d$, but
are independent of $\lambda$.
\end{lemma}

\begin{proof}
In view of \eqref{mult:simpler} and \Cref{lem:mult2}, we have
\[
	K_1 \le k(\lambda,0) 
		+ \left( \int_0^\infty k^p(\lambda,y)\,dy \right)^{1/p} ,
\]
where
\[
	k(\lambda,y) = \max_{i=1,2,3} \sup_{z \in \Sigma_\theta}
					|\lambda| m_i(\lambda,z,y) .
\]
By \Cref{thm:Lemma2} we thus obtain
\begin{align*}
	K_1 \le \frac{c |\lambda|}{\alpha + |\lambda|}
		\left[ 1 + \left( \int_0^\infty \frac{{\rm d}y}{(1+y\sqrt{|\lambda|})^p}
			\right)^{1/p} \right]
		\le c \frac{ |\lambda|^{1-1/2p}}{\alpha + |\lambda|} \, .
\end{align*}
This is bounded independently of $\lambda$ in a given range.
Estimate of $K_2$ is obtained similarly.
\end{proof}

In the Hilbert case $p=2$, one obtains the estimate of 
$\bu \in W^{s,2}$ directly.
\begin{lemma}	\label{lem:fundH2}
The solution $\bu$ to \eqref{eq:InsideEquation}--\eqref{eq:BoundaryEquation} for $\lambda > 0$
satifies the estimates
\begin{equation}	\label{est-H2}
c (\lambda)
\norm{\bm \phi}{W^{s-1/2,2}(\Gamma)}{}
\le
\norm{\nabla \bu}{W^{s,2}(\Omega)}{}
\le C (\lambda)
\norm{\bm \phi}{W^{s-1/2,2}(\Gamma)}{}
\end{equation}
for any $s \in[1/2 , 3/2)$. In particular, $\bu \in W^{s+1,2}(\Omega)$
if (and only if) $\bm\phi \in W^{s-1/2,2}(\Gamma)$.
\end{lemma}
%%%%%%%%%%%%%%%%%%%%%%%%%%%%%%%%%%%%%%%%%%%%%%%%%%%%%%%%%%%%%%%%%%%%%%
\begin{proof}
Let us comment first on the last part of the statement. If $\bm\phi \in W^{s-1/2,2}(\Gamma)$, then $\bu \in W^{s+1,2}(\Omega)$ follows from \eqref{est-H2} together with the energy inequality; see \eqref{res:weak}. Note that $\lambda > 0$ is needed since the Poincaré inequality in the half-space is not valid. On the other hand, if $\bu \in W^{s+1,2}(\Omega)$, then $\bm\phi \in W^{s-1/2,2}(\Gamma)$ holds due to \eqref{eq:BoundaryEquation} and the trace theorem.

We will sketch the proof of \eqref{est-H2} for $d=2$, the general case being similar.
The key point is that since $p=2$, we can use the Fourier transform to
characterize the Sobolev norm of $\phi$ as 
\[
	\norm{\phi}{W^{s-1/2,2}(\Gamma)}{2}
	= \int_\rr (1+|\xi|^2)^{s-1/2}|\widehat{\phi}(\xi)|^2\, {\rm d}\xi \,,
\]
which we want to relate to the velocity gradient norm
\[
	\norm{\nabla \bu}{W^{s,2}(\Omega)}{2}
	=
	\int_{\rr^2} (1 + \xi^2 + \eta^2)^{s}
			| \mathcal{F} (\nabla\bu)(\xi,\eta)|^2 \, {\rm d}\xi  {\rm d}\eta \,,
\]
where $\mathcal{F}$ denotes the full Fourier transform in $\mathbb{R}^2$ of a function that is understood to be evenly extended with respect to the variable $y$. We will also frequently use the symbol $A\simeq B$ as a shorthand
for $A \le c_1 B$ and $B \le c_2 A$, where $c_1$, $c_2>0$ are some
irrelevant constants, and $A \lesssim B$ for a one-sided estimate.
\par
Since $d=2$, we have $\bu = (u_1,u_2)$, and relations
\eqref{def-m0}, \eqref{mult:simpler} lead to formulas for
the partial Fourier transform of $\widehat{\nabla}=(i\xi,\partial_y)$
\begin{align*}
\widehat{\nabla u_1}(\xi,y) &= 
-\big(i\xi \partial_y m_0(z,y) , \partial^2_y m_0(z,y) \big) 
	\widehat{\phi}(\xi) ,
\\
\widehat{\nabla u_2}(\xi,y) &=
\big( i\xi z m_0(z,y) , z \partial_y m_0(z,y) \big) \frac{i\xi}{|\xi|}
	    \widehat{\phi}(\xi) ,
\end{align*}
where $z = |\xi|$ and
\[
	m_0(z,y) = R(z) \big( e^{-\omega(z)y} - e^{-zy} \big) .
\]
Recall that $\omega(z) = \sqrt{\lambda + z^2}$; for simplicity, 
we omit the dependence on  $\lambda$, which is now fixed. 
Note that likewise $R(z)$ is not relevant as $|R(z)| \simeq 1$.
\par
It will be useful to set 
\begin{align*}
Z_j(z,\eta) = \mathcal{F}_{y \to \eta} [ \partial^j_y ( e^{-\omega(z)y} - e^{-zy}) ], \quad j=0,1,2 ;
\end{align*}
even extension with respect to $y$ is used before applying
the transform. The full Fourier transform of $\nabla \bu$ now consists of
\begin{align*}
\mathcal{F}(\nabla u_1)(\xi,\eta) &=
-\big( i\xi  Z_1 (z,\eta) ,  Z_2(z,\eta) \big)
	R(z) \widehat{\phi}(\xi)  , 
\\
\mathcal{F}(\nabla u_2)(\xi,\eta) &=
\big( i\xi z  Z_0(z,\eta) , z  Z_1(z,\eta) \big)
	R(z) \widehat{\phi}(\xi)  .
\end{align*}
The key step is to evaluate and estimate 
$Z_j(z,\eta) $. In view of the formula
\begin{equation*}	
	\mathcal{F}_{y \to \eta}[ \exp(-a|y|) ]
	= \frac{a}{a^2+\eta^2} \, , \quad \mathfrak{Re}\, a > 0 ,
\end{equation*}
we obtain
\begin{align*}
Z_0(z,\eta)
&= \frac{\omega(z)}{\omega^2(z)+\eta^2}
- \frac{z}{z^2+\eta^2} = \frac{ \lambda }{\omega(z)+z} \cdot
	\frac{ \eta^2 - z \omega(z) }{(\omega^2(z)+\eta^2)(z^2+\eta^2)} \, ,
\\
Z_1(z,\eta)
&= 
\frac{z^2}{z^2+\eta^2} - \frac{\omega^2(z)}{\omega^2(z)+\eta^2}
=
\frac{ -\lambda \eta^2 }{(\omega^2(z)+\eta^2)(z^2+\eta^2)} \, ,
\\
Z_2(z,\eta)
 	&= \frac{\omega^3(z)}{\omega^2(z)+\eta^2}
	- \frac{ z^3 }{z^2+\eta^2} 
\\
&= \frac{\lambda( \eta^2(z^2 + z\omega(z) + \omega^2(z)) + z^2 \omega^2(z)))}%
{(z+\omega(z))(\omega^2(z)+\eta^2)(z^2+\eta^2)} \, .
\end{align*}
Recalling now that $\lambda > 0$ is fixed, it is straighforward to
deduce that $\omega(z)\simeq 1+z$, $\omega^2(z) \simeq 1 + z^2$, and
hence also $|\eta^2 - z\omega(z)| \lesssim \eta^2 + z^2 + z$, and
$|\eta^2(z^2 + z\omega(z) + \omega^2(z)) + z^2 \omega^2(z)|
\simeq (\eta^2+z^2)(1+z^2)$. We thus can write
\begin{align*}
|\mathcal{F} (\nabla u_1)(\xi,\eta)| &\simeq 
\big( |z Z_1(z,\eta)| + |Z_2(z,\eta)| \big) 
	|\widehat{\phi}(\xi)| \simeq \frac{1+z}{1+z^2+\eta^2} |\widehat{\phi}(\xi)| ,
	\\
|\mathcal{F}(\nabla u_2)(\xi,\eta)| &\simeq 
\big( |z^2 Z_0(z,\eta)| + |z Z_1(z,\eta)| \big) 
	|\widehat{\phi}(\xi)| \lesssim \frac{1+z}{1+z^2+\eta^2} |\widehat{\phi}(\xi)| .
\end{align*}
This eventually leads to a key pointwise estimate
\begin{align*}
|\mathcal{F} (\nabla \bu)(\xi,\eta)|	\simeq
\frac{(1+|\xi|^2)^{1/2}}{1+|\xi|^2+\eta^2} |\widehat{\phi}(\xi)| .
\end{align*}
It is now straighforward to deduce
\begin{align*}
	\norm{\nabla \bu}{W^{s,2}(\Omega)}{2} 
&= \int_{\rr^2} (1+|\xi|^2+\eta^2)^s |\mathcal{F} (\nabla \bu )(\xi,\eta)|^2 \, {\rm d}\xi  {\rm d}\eta
\\
		&\simeq \int_{\mathbb{R}} (1+|\xi|^2) \Big( \int_{\mathbb{R}}  (1+|\xi|^2+\eta^2)^{s-2} \, {\rm d}\eta \Big) |\widehat{\phi}(\xi)|^2 \, {\rm d}\xi 
\\
		&\simeq \int_{\mathbb{R}} (1+|\xi|^2)^{s-1/2} |\widehat{\phi}(\xi)|^2 \, {\rm d}\xi 
		= \norm{\phi}{W^{s-1/2}(\Gamma)}{2} .
\end{align*}
Here we have also used that
\begin{equation*}	
	\int_{\rr} \frac{{\rm d}\eta}{(b^2+\eta^2)^\sigma} 
		\simeq b^{1-2\sigma}, \quad \sigma > 1/2 . \qedhere
\end{equation*}
\end{proof}
%%%%%%%%%%%%%%%%%%%%%%%%%%%%%%%%%%%%%%%%%%%%%%%%%%%%%%%%%%%%

\section{Proof of the main results} 	\label{S:5}

\begin{proof}[Proof of \autoref{thm:Main1}]

We start with formally recovering the resolvent estimate \eqref{est:Res}. 
Decompose the 
solution as $\bu = \bu^1 + \bu^2$, where $\bu^1$ solves the same problem, but with the zero Dirichlet boundary condition, and $\bu^2$ has the zero right-hand side in $\Omega$ and takes care of the boundary condition. To be specific; the function $\bu^1$ solves the system
\begin{equation}	\label{Split1}
\begin{aligned}
(\lambda - \Delta) \bu^1 + \nabla \pi^1 &= \bm f \quad \text{ in } \Omega ,  \\
\diver \bu^1 &= 0 \quad  \text{ in } \Omega ,  \\
\bu^1  &= 0 \quad  \text{ on } \Gamma  ,
\end{aligned}
\end{equation}
and $\bu^2$ solves 
\begin{equation}	\label{Split2}
\begin{aligned}
(\lambda - \Delta) \bu^2 + \nabla \pi^2 &= 0 \quad \text{ in } \Omega ,  \\
\diver \bu^2 &= 0 \quad  \text{ in } \Omega , \\
\bu^2 \cdot \bm n &= 0 \quad  \text{ on } \Gamma ,  \\
(\lambda + \alpha)\bu^2 + 2 [(\Du^2) \bm n]_{\tau} &= \bm \Phi \quad  \text{ on } \Gamma  ,
\end{aligned}
\end{equation}
where
\begin{equation}	\label{Split3}
\bm \Phi = \bm h - 2 [(\Du^1) \bm n]_{\tau} .
\end{equation}
Assume that $\lambda \in \mathbb{C}$ with $|\lambda| > \omega$, 
where $\omega > 0$ is fixed.  Due to the standard theory 
(see e.g. \cite[Theorem 1.2]{FaSo94}) we know that
\begin{align}	\label{est:U1}
  || \lambda \bu^1 ||_{L^p(\Omega)} + || \nabla^2 \bu^1 ||_{L^p(\Omega)}  
\leq c || \bm f ||_{L^p(\Omega)} \,,
%|| \nabla g ||_{L^p(\Omega)} + || \lambda g ||_{W^{-1,p}(\Omega)} \big)   ,
\end{align}
and in particular, 
\begin{align*}
  || \bu^1 ||_{L^p(\Omega)}   \leq \frac{c}{|\lambda|} 
 || \bm f ||_{L^p(\Omega)} \,.
%|| \nabla g ||_{L^p(\Omega)} + || \lambda g ||_{W^{-1,p}(\Omega)} \big) .
\end{align*}
Because of the trace theorem we get 
\begin{align*}
  || [(\Du^1) \bm n]_{\tau} ||_{L^p(\Gamma)} \leq c  || \bm f ||_{L^p(\Omega)}  ,
\end{align*}
and thus, there also holds   
\begin{align*}
|| \bm  \Phi  ||_{L^p(\Gamma)}  & =   ||\bm h - 2 [(\Du^1) \bm n]_{\tau} ||_{L^p(\Gamma)}
   \leq c || (\bm f, \bm h) ||_{{L^p(\Omega)} \times {L^p(\Gamma)}}  . 
\end{align*}
Note that $c$ remains bounded as $|\lambda| \to \infty$.
Hence, it suffices to show 
\begin{align*}
  || (\bu^2, \text{tr}\, \bu^2 ) ||_{{L^p(\Omega)} \times {L^p(\Gamma)}}    \leq \frac{c}{|\lambda|}   || \bm \Phi  ||_{L^p(\Gamma)} .  
\end{align*}
But this follows by \autoref{lem:fundLp} above, 
in particular the estimate \eqref{est-fund1}, because $\bm \Phi = (\bm \phi, 0)$.

%%%%% NEW: jednoznačnost a odhady gradientu
%%%%%%%%%%%%%%%%%%%%%%%%%%%%%%%%%%%%%%%%%%%%%%%%%%%%%%%%%%%%
\par
So far, the argument was formal. Using a suitable approximation,
one shows that the resolvent problem \eqref{Res:inside}--\eqref{Res:bound}
has at least one solution. In view of \eqref{est:U1} and the second part
of \autoref{lem:fundLp}, i.e. \eqref{est-fund2}, we see that $\bu \in W^{1,p}(\Omega)$.
More precisely, we have 
\begin{equation}	\label{est:Res-w1}
\norm{\bu}{W^{1,p}(\Omega)}{} 
%+ \norm{\pi}{L^p(\Omega)}{} 
\le C \big(
\norm{\bef}{L^p(\Omega)}{} + \norm{\beh}{L^p(\Bdry)}{}
	\big),
\end{equation}
where $C$ depends on $\lambda$. Let us show uniqueness; clearly,
it suffices to consider $\bu$ a solution to the homogeneous problem.
We set  $\widetilde \bef = |\bu|^{p-2}\overline{\bu}$,
$\widetilde \beh = |\bu|^{p-2}\overline{\bu}$ and observe that 
$\widetilde \bef \in L^{p'}(\Omega)$, $\widetilde \beh \in L^{p'}(\Bdry)$.
By the above, there is some $\widetilde{\bu} \in W^{1,p'}_{\sigma,n}(\Omega)$,
a solution corresponding
to these data. Testing the weak formulation for $\bu$ by $\widetilde{\bu}$
and vice versa, one deduces that
\[
	0 = \int_\Omega \widetilde{\bef}\cdot \bu \,{\rm d}x
	+ \int_\Bdry \widetilde{\beh} \cdot \bu \,{\rm d}S
	= \norm{\bu}{L^p(\Omega)}{p} + \norm{\bu}{L^p(\Bdry)}{p} \,.
\]
Hence $\bu =0$.
\end{proof}	%%%% konec dukazu - Theorem 2.1

\begin{proof}[Proof of \autoref{thm:Main2}]

We split the solution $\bu = \bu^1 + \bu^2$ as above, 
allowing moreover $\diver \bu^1=g$ in \eqref{Split1}.
Hence, part (iii) of the Theorem is taken into account.

Let us now consider the strong estimates, i.e. part (ii).
Regularity of $\bu^1$ follows as in \cite{FaSo94}, see also
\cite[Theorem IV.3.2]{Galdi11}.  Note that $\bm \Phi 
\in B^{1-1/p,p}_p(\Bdry)$ by the trace theorem \cite[Theorem 7.39]{AF03}.
\par
Concerning $\bu^2$, we note that for a flat boundary, we have
the so-called Robin boundary condition, whence the $W^{2,p}$-regularity 
follows by \cite{ShiShi07} or \cite{kaji22}. 
%(Here the authors only provide the
%estimate in terms of $\beh \in W^{1,p}(\Omega)$, for which
%$B^{1-1/p,p}_p(\Bdry)$ is an exact trace space, though.)

\par
Nevertheless, 
we will sketch the proof that $\nabla \pi \in L^p(\Omega)$.
In particular, we can adapt the argument from \cite{FaSo94},
which is here simplified in view of \autoref{lem:mult2}.
(The estimate $\nabla \bu^2$ is done similarly, or just
follows by the regularity for the laplacian again.)
\par
Introduce an auxiliary function $\bv = (\bv', 0)$ as the solution to
\begin{align*}
(-\delta^2 \Delta' - \partial_d^2) \bv' &= 0 \quad \text{ in } \Omega , \\
-\partial_d \bv' &= \bm \phi \quad \text{ on } \Gamma ,
\end{align*}
where $\delta>0$ is a suitably small number.
We know (see e.g. \cite[Theorem 14.1]{ADN59}) that 
\begin{align}	\label{reg-vv}
\norm{ \nabla^2 \bv }{L^p(\Omega)}{}
\le c \norm{ \bm \phi}{B^{1-1/p,p}_p(\Gamma)}{}
\,.
\end{align}
We proceed by noting that
\begin{align*}
\widehat{\nabla' \bv}(\xi, x_d) = \frac{i\xi}{\delta |\xi|} e^{-\delta |\xi| x_d} \widehat{\bm \phi}(\xi) , \quad \textrm{or} \qquad
\frac{i\xi}{|\xi|}\cdot \widehat{\bm\phi}(\xi) = 
	\delta e^{\delta |\xi| x_d} \widehat{\nabla' \bv}(\xi,x_d) ,
\end{align*}
and hence, in view of \eqref{mult:simpler}, we can write
\begin{equation*}	
\widehat{\pi}(\xi,y) = f(z)e^{-\eta zy} \widehat{\nabla'\bv}(\xi,y) .
\end{equation*}
Here $\eta=1-\delta$, $z=|\xi|$, $y=x_d$ and 
\[
	f(z) = 
\frac{ \omega(\lambda,z) + z }{\alpha + \lambda + \omega(\lambda,z) + z}
\]
is bounded and holomorphic in $\Sigma_{\pi/2}$. Hence
\begin{align*}
	\widehat{\nabla\pi}(\xi,y) 
	&= (i\xi,\partial_y) \big[ f(z) e^{-\eta zy} \widehat{\nabla'\bv}(\xi,y) \big]
\\
	&= \big[ f(z)e^{-\eta zy} \big] \big( 
		\widehat{(\nabla')^2 \bv }(\xi,y) , 
		(-\eta z + \partial_y) \widehat{\nabla' \bv}(\xi,y) \big) .
\end{align*}
The trick is now to rewrite the last term as
\begin{equation*}
	(-\eta z + \partial_y) \widehat{\nabla' \bv}(\xi,y) = 
	-\frac{i\eta \xi}{|\xi|}\cdot \widehat{(\nabla')^2 \bv }(\xi,y)
		+ \widehat{\partial_y \nabla'\bv}(\xi,y) .
\end{equation*}
We conclude that $\widehat{\nabla \pi} \simeq m_5(z,y) \widehat{\nabla^2 \bv}$,
where $m_5(z,y)$ is holomorphic and bounded in $z$ uniformly with respect to $y\ge0$.
By the second part of \autoref{lem:mult2}, cf. \eqref{est-ki}, we deduce
\begin{equation*}
		\norm{\nabla \pi}{L^p(\Omega)}{} \le c
		\norm{\nabla^2 \bv}{L^p(\Omega)}{} .
\end{equation*}
In view of \eqref{reg-vv}, the conclusion follows.

\par\medskip

It remains to show part (i). The existence and regularity of $\bu^1$
follows from \cite[Theorem IV.3.3]{Galdi11}.
The key step is to bound $\Du^1 \in B^{-1/p,p}_p(\Bdry)$, 
to which we need the following auxiliary result.

\begin{lemma}	\label{lm:ntrace}
Let $\bm U \in L^p(\Omega)$ and $\diver \bm U \in W^{-1,p}(\Omega)$.
Then $(\bm  U\cdot \bm n)_\tau \in B^{-1/p,p}_{p;n}(\Bdry)$.
\end{lemma}
\begin{proof}
Integrating by parts, we write
\[
\int_\Bdry (\bm U \cdot \bm n)\bm \varphi \,{\rm d}S	
= \int_\Omega \bm U \cdot \nabla \bm \varphi \,{\rm d}x 	
	+ \int_\Omega (\diver \bm U)\bm \varphi \,{\rm d}x 	
\]
and note that the right-hand side can be bounded in terms
of $\bm \varphi \in W^{1,p'}_n(\Omega)$. Denoting formally
by $\gamma$ the trace operator, it remains to observe that 
\[ 
\gamma (W^{1,p'}_n(\Omega)) = B^{1/p,p'}_{p';n} (\Bdry) = 
(B^{-1/p,p}_{p;n}(\Bdry))^* \,. \qedhere
\] 
\end{proof}
Applying now the above lemma to $\bm U = \nabla \bu^1$, we see
that $\bu^2$ solves \eqref{Split2}--\eqref{Split3}
where $\bm \Phi \in B^{-1/p,p}_{p;n}(\Bdry)$.
\par
Set $\bv = \bpjx \bu^2$, where $\bpjx$ is the
Bessel potential applied in the tangential directions $x'$.
Obviously, $\bv$ solves \eqref{Split2} with the boundary 
data $ \bpjx \bm \phi \in B^{1-1/p,p}_{p}$.
Hence $\bv \in W^{2,p}(\Omega)$, and thus 
$\bu^2 \in W^{1,p}(\Omega)$ as required.
\end{proof}

{\bf Acknowledgment.} The authors are grateful to anonymous
referee for his/her careful reading of the manuscript.

\bibliographystyle{plain}
\bibliography{bibliography}

\end{document}